\theoremstyle{plain}
\newtheorem{thm}{Theorem}
\newtheorem{lemma}[thm]{Lemma}
\newtheorem{qn}[thm]{Question}
\theoremstyle{definition}
\newtheorem{defn}[thm]{Definition}
\theoremstyle{remark}
\newtheorem*{prop*}{Proposition}
\newtheorem*{claim*}{Claim}
\numberwithin{equation}{section}
\numberwithin{thm}{section}
\newcommand{\tpmod}[1]{{\@displayfalse\pmod{#1}}}
\title{Irreducible Polynomials with Coefficients in an Affine Algebraic Set}
\author{Neil Kolekar\footnote{\href{mailto:nkolekar@gmail.com}{nkolekar@gmail.com}}}
\date{\today}
\begin{document}
\setlength{\parindent}{15pt}

\maketitle
\begin{abstract}
    In this paper, we give error bounds on the number of monic irreducible polynomials $a_0+a_1x+\dots+a_{n-1}x^{n-1}+x^n$ over a finite field $\mathbb{F}_q$ of degree $n$ with $(a_0, a_1, \dots, a_{n-1}, 1)$ lying in a fixed affine algebraic set $V$ of points in $\mathbb{F}_q^{n+1}$. 
\end{abstract}

\section{Introduction}
The study of irreducible polynomials with prescribed coefficients over finite fields carries a rich history, with much progress having been made over the last three decades. While the problem of counting irreducible polynomials over finite fields can be traced back to the early 19th century when Gauss proved that there are approximately $q^n/n$ irreducible polynomials of degree $n$ in $\mathbb{F}_q[x]$, this problem with the additional restriction of prescribed coefficients was called into question in 1992 by Hansen and Mullen \cite{Hansen92}, who conjectured that for all prime powers $q \ge 3$ and positive integers $n$, one can find a monic irreducible polynomial of degree $n$ in $\mathbb{F}_q[x]$ with any single coefficient prescribed. In 1997, Wan \cite{Wan97} proved the Hansen-Mullen conjecture for $n \ge 36$ or $q > 19$, leaving finitely many cases remaining; the conjecture was proved in its entirety by Ham and Mullen, who computationally verified the remaining cases. 

In general, error bounds on the number of irreducible polynomials with prescribed coefficients have played a key role in the literature of the subject. Their practical usage is mostly present in polynomial existence proving, as proving existence analytically is often much easier compared to direct approaches. However, their ability to represent bias towards some coefficient selections over others is also of interest; in the broader context of arithmetic geometry, for instance, the \textit{analytic rank} is defined to measure the bias of tensors, and has been a topic of interest to many researchers in the field \cite{moshkovitz2024quasilinearrelationpartitionanalytic}. 

Since it is hard to understand why singular coefficient prescriptions differ in representation among irreducible polynomials, we are motivated to consider counting irreducible polynomials $a_0+a_1x+\dots+a_{n-1}x^{n-1}+x^n \in \mathbb{F}_q[x]$ whose coefficient vector $(a_0, \dots, a_{n-1}, 1)$ lies in a fixed affine set $V$ of points in $\mathbb{F}_q^{n+1}$, which is the objective of this paper. Indeed, such restrictions have been studied in depth in the context of positive integers and prescribed digits, primarily using tools from analytic number theory. Nevertheless, it is notable that this has also been studied briefly in the finite fields setting, which is often easier to work in than the positive integers; see \cite{Gao2021} for self-reciprocal irreducible polynomials, for example. An important work in this line of research is that of Mérai, who gave error bounds \cite{Mérai2025} for the distribution of the Rudin-Shapiro function over irreducible polynomials in $\mathbb{F}_q[x]$. This was one of the first error bounds obtained on the distribution of a polynomial function in the coefficients of irreducible polynomials over finite fields.

The goal of this paper is as follows. 
\begin{quote}
\textsl{Fix polynomials $R_1, \dots, R_m$ on the coefficients of monic polynomials in $\mathbb{F}_q[x]$ of degree $n$. Find error bounds for the number $I_n(R_1, \dots, R_m)$ of monic irreducible polynomials $f$ of degree $n$ such that $(R_1(f), \dots, R_m(f)) = (0, \dots, 0)$.}
\end{quote}
Since on average, the polynomials $R_1$, \dots, $R_m$ take on equidistributed values, we should expect to have \[ I_n(R_1, \dots, R_m) \approx \frac{I(n)}{q^m}, \] where $I(n) = q^n/n + O(q^{n/2}/n)$ is the number of irreducible polynomials of degree $n$ over $\mathbb{F}_q$. Thus, we provide an upper bound on the difference $\left|I_n(R_1, \dots, R_m) - \frac{I(n)}{q^m}\right|$. 
\begin{thm}\label{main theorem}
  Let $\mathbb{F}_q$ be a finite field of characteristic $p$ with $q$ odd. Suppose that $n < p$. Let $j:=\max(\deg R_1, \dots, \deg R_m)$, and let $S$ denote the set of indices $1 \le i \le m$ for which $\deg R_i(f) = j$. Define $(P_i)_g(f)=R_i(fg)$ and $(P_i)_{g_1, g_2}(f) = R_i(g_1f) - R_i(g_2f)$. There exists a constant $c>0$, dependent only on $j$, such that \[ \left|I_n(R_1, \dots, R_m)-\frac{I(n)}{q^m}\right| \le \frac{q^m-1}{q^m} \cdot \min_{0 \le u+v < n} \left(nS_1(u, v) + n^{5/2}q^{n-(u+v)/2}\sqrt{S_2(u, v)}\right) \] where \[ S_1(u, v) = \sum_{0 \le d \le u+v} \sum_{\deg g = d} q^{(n-d)+\frac{n-d}{2^j}-c \cdot \operatorname{rank}(((P_i)_g)_{i \in S})} \] and \[ S_2(u, v) = \max_{v \le k \le n-u} \max_{\deg g_1 = n-k} \sum_{\deg g_2 = n-k} q^{k-c \cdot \operatorname{rank}(((P_i)_{g_1, g_2})_{i \in S})}. \]
\end{thm}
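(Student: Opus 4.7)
The plan is to detect the vanishing constraints $R_i(f)=0$ via additive characters, reduce to exponential sums over monic irreducibles of degree $n$ with polynomial phase in the coefficients of $f$, apply Vaughan's identity in $\mathbb{F}_q[x]$ to convert these into Type~I and Type~II bilinear sums, and bound each by a Weyl-style differencing estimate whose cancellation is controlled by the rank of the top-degree part of the phase.

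\textbf{Setup via characters.} First I fix a nontrivial additive character $\psi_0\colon\mathbb{F}_q\to\mathbb{C}^\times$ and use $\mathbf{1}[R_i(f)=0]=q^{-1}\sum_{t_i\in\mathbb{F}_q}\psi_0(t_iR_i(f))$ to write
\[
I_n(R_1,\dots,R_m)=\frac{1}{q^m}\sum_{t\in\mathbb{F}_q^m}\sum_{f\text{ mon.\ irr., }\deg f=n}\psi_0\!\left(\textstyle\sum_i t_iR_i(f)\right).
\]
The $t=0$ summand equals $I(n)/q^m$, while each of the remaining $q^m-1$ values of $t$ needs a common bound, producing the prefactor $(q^m-1)/q^m$. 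Writing $P_t:=\sum_it_iR_i$, the task is to estimate $|\sum_f\psi_0(P_t(f))|$ for fixed $t\ne 0$; I may replace the sum over irreducibles by $n^{-1}\sum_f\Lambda(f)\psi_0(P_t(f))$ at the negligible cost $O(q^{n/2})$ from proper prime powers.

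\textbf{Vaughan decomposition; Type~I and Type~II.} Next I apply Vaughan's identity in $\mathbb{F}_q[x]$ with parameters $q^u,q^v$ to split $\Lambda$ into a Type~I part (a sum over divisors $g$ of degree $d\le u+v$ with coefficients $|a_g|\le n$) and a Type~II part (a bilinear sum over factorizations $f=gh$ with $v\le\deg g\le n-u$ and coefficients of size $\le n$). For Type~I, the inner character sum is $\sum_{h\text{ mon., }\deg h=n-d}\psi_0(P_t(gh))$; as a polynomial in the coefficients of $h$ this has total degree $\le j$ with top-degree part $\sum_{i\in S}t_i(P_i)_g$. The Weyl-differencing estimate below yields $q^{(n-d)+(n-d)/2^j-c\cdot\operatorname{rank}(((P_i)_g)_{i\in S})}$, and summing over $g$ produces the $n\cdot S_1(u,v)$ contribution. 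For Type~II, Cauchy--Schwarz in the $h$-variable gives a double sum of the shape
\[
\sum_{g_1,g_2}\;\bigg|\sum_h\psi_0\bigl(P_t(g_1h)-P_t(g_2h)\bigr)\bigg|
\]
(times a polynomial in $n$ and a power of $q$), where the inner sum over $h$ of degree $k=n-\deg g_1$ has top-degree part $\sum_{i\in S}t_i(P_i)_{g_1,g_2}$; the same Weyl bound gives $q^{k-c\cdot\operatorname{rank}}$, and the $L^2$-bookkeeping of coefficients together with the $\sqrt{n}$ loss from the range of the split parameter $k\in[v,n-u]$ yields $n^{5/2}q^{n-(u+v)/2}\sqrt{S_2(u,v)}$. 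Taking the minimum over $0\le u+v<n$ closes the proof.

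\textbf{Main obstacle.} The crux of the argument is the Weyl-type polynomial character sum bound: for any polynomial $Q$ on coefficient vectors of monic polynomials of degree $N$, of total degree $\le j$ with top-degree homogeneous part $Q^{(j)}$,
\[
\bigg|\sum_{h\text{ mon., }\deg h=N}\psi_0(Q(h))\bigg|\ll q^{N(1+1/2^j)-c\cdot\operatorname{rank}(Q^{(j)})}
\]
for some $c=c(j)>0$. Iterated additive differencing reduces the degree by one per round at the cost of squaring the sum; after $j$ rounds the phase is linear in the remaining coefficient variables, and cancellation is governed by how often the multilinear form associated to $Q^{(j)}$ vanishes on coefficient-shift tuples---exactly what the rank measures. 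The hypothesis $n<p$ is used precisely here to ensure that successive differences do not degenerate in characteristic $p$. Everything else is Vaughan-style bookkeeping adapted to the function-field setting.
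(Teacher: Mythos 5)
Your architecture matches the paper's exactly: character orthogonality to detect the constraints, passage to a $\Lambda$-weighted sum over monics, the function-field Vaughan identity producing Type I/II sums, and iterated differencing (polarization) reducing everything to multilinear exponential sums. The gap sits precisely at the step you yourself flag as the crux. Weyl differencing reduces $\bigl|\sum_h\psi_0(Q(h))\bigr|^{2^j}$ to a power of $q$ times the multilinear sum $\sum_{h_1,\dots,h_j}\psi_0(Q^{\circ}(h_1,\dots,h_j))$, and the size of that sum is, by definition, controlled by the \emph{analytic} rank of the polarized tensor (it equals $q$ to the number of variables minus the analytic rank). But the exponent in $S_1$ and $S_2$ is stated in terms of $\operatorname{rank}$, a partition-rank-type quantity: the minimum, over nontrivial linear combinations of $(P_i)_{i\in S}$, of the number of reducible homogeneous polynomials needed to express the top-degree component. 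Your claim that the vanishing frequency of the multilinear form is ``exactly what the rank measures'' silently identifies these two notions. The direction actually needed, $\operatorname{AR}(T)\ge c\cdot\operatorname{PR}(T)$ with $c=c(j)$, is the recent Moshkovitz--Zhu theorem, which is the paper's essential external input; it does not follow from the differencing argument, and without it you only obtain savings in terms of analytic rank, which is not the quantity appearing in the statement.

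A smaller point: the hypothesis $n<p$ is not needed to keep the successive differences from degenerating (differencing works in any characteristic); the paper uses it to guarantee that the top-degree homogeneous component of $Q$ equals $\frac{1}{j!}Q^{\circ}(x,\dots,x)$, so that $\operatorname{rank}(Q)\le\operatorname{PR}(Q^{\circ})$ and the Moshkovitz--Zhu bound on the polarization transfers to the rank of the original polynomial. The rest of your outline --- the $(q^m-1)/q^m$ prefactor, the $O(q^{n/2})$ prime-power correction, the Cauchy--Schwarz treatment of the Type II sums --- agrees with the paper's bookkeeping.
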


The structure of this paper is as follows. We begin by stating the preliminaries assumed throughout this paper in Section \ref{prelim}. Next, in Section \ref{lemmas}, we state and prove the lemmas that play a key role in the proof of our main result. Our approach, based on character sums, uses a function field analogue of Vaughan's identity, polarization to reduce the problem to working with multilinear exponential sums, and partition rank to bound the multilinear exponential sums. In Section \ref{main proof}, we prove the main result by applying the lemmas from Section \ref{lemmas}, and we end with Section \ref{future directions}, in which we discuss future research directions.   

\section{Preliminaries}\label{prelim}
Throughout this paper, we will let $q$ be an odd prime power with prime divisor $p$. We will use $\mathcal{M}(n)$ to denote the set of polynomials of degree $n$ in $\mathbb{F}_q[x]$, and $\mathcal{P}(n)$ to denote the set of irreducible polynomials in $\mathcal{M}(n)$. Also, let $R_1(f), \dots, R_m(f)$ denote $m$ polynomial functions on the coefficients of $f$. Denote $\Lambda$ by the $\mathbb{F}_q[x]$-analogue of the \emph{von Mangoldt function}, that is, \[ \Lambda(f) = \begin{cases} \deg P & f=P^r, \text{where $P$ is irreducible} \\ 0 & \text{otherwise.} \end{cases}. \] Use $\psi$ (often with a subscript) to denote an additive character on $\mathbb{F}_q$. It is well-known that there is a bijection $\varphi$ mapping the group $\widehat{\mathbb{F}_q}$ of additive characters over $\mathbb{F}_q$ to $\mathbb{F}_q$ such that \[ \psi(x) = e^{2\pi i \operatorname{Tr}(\varphi(\psi)x)} \] for all $\psi \in \widehat{\mathbb{F}_q}$ and $x \in \mathbb{F}_q$. Let $\psi_0$ denote the additive character with $\varphi(\psi_0) = 1$. The final important terminology we introduce is that of the rank of polynomials, and more generally, the rank of tuples of polynomials. The \emph{rank} $\operatorname{rank}(f)$ of a multivariable polynomial $f$ is defined as the smallest number $r$ such the degree $\deg(f)$ homogeneous component of $f$ can be expressed as the sum of $r$ reducible homogeneous polynomials. Furthermore, the \emph{rank} $\operatorname{rank}(R_1, \dots, R_m)$ of a tuple $(R_1, \dots, R_m)$ of polynomials is given by the minimum rank of any nontrivial linear combination of $R_1, \dots, R_m$. 

\section{Auxiliary lemmas}\label{lemmas}
\subsection{Vaughan's identity in $\mathbb{F}_q[x]$}
In \cite{Mérai2025}, the following variant of Vaughan's identity for arithmetic functions on $\mathbb{N}$ is utilized. This bound encapsulates the presence of the von Mangoldt function in the character sums that we work with later.

\begin{lemma}\label{Vaughan's identity}
  Let $\Psi: \mathbb{F}_q[x] \to \mathbb{C}$ be a function with $|\Psi(f)| \le 1$. If $u$ and $v$ are two integers in $[1, n]$ with $u+v>n$, then \[ \left|\sum_{f \in \mathcal{M}(n)} \Lambda(f)\Psi(f)\right| \ll n\Sigma_1 + n^{5/2}q^{n-(u+v)/2}\Sigma_2^{1/2}, \] with \[ \Sigma_1 = \sum_{\deg g \le u+v} \left|\sum_{\deg h = n - \deg g} \Psi(gh)\right| \] and \[ \Sigma_2 = \max_{v \le i \le n-u} \max_{\deg g_1 = n-i} \sum_{\deg g_2 = n-i} \left|\sum_{\deg h = i} \Psi(hg_1)\overline{\Psi(hg_2)}\right|, \] where both sums above are taken over monic polynomials.
\end{lemma}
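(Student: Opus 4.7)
The plan is to establish a function-field Vaughan decomposition of $\Lambda$, substitute it into the character sum, and estimate the resulting Type I and Type II pieces. The argument follows the classical derivation of Vaughan's identity transcribed into $\mathbb{F}_q[x]$.

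\textbf{Step 1: Vaughan decomposition.} Working with Dirichlet convolution $*$ on monic polynomials in $\mathbb{F}_q[x]$, set $L(f) = \deg f$ and $\mathbf{1} \equiv 1$. The identities $L = \Lambda * \mathbf{1}$ and $\mu * \mathbf{1} = \delta$ give $\Lambda = \mu * L$. Introduce truncations $\mu^{\leq u} = \mu \cdot [\deg \leq u]$, $\Lambda^{\leq v} = \Lambda \cdot [\deg \leq v]$, and their complements. A direct algebraic manipulation (splitting $\mu = \mu^{\leq u} + \mu^{>u}$ and $\Lambda = \Lambda^{\leq v} + \Lambda^{>v}$) produces
\[ \Lambda \;=\; \Lambda^{\leq v} \;+\; \mu^{\leq u} * L \;-\; \mu^{\leq u} * \Lambda^{\leq v} * \mathbf{1} \;+\; \mu^{>u} * \Lambda^{>v} * \mathbf{1}. \]
Since the first term vanishes on $\mathcal{M}(n)$ whenever $v < n$, substitution into the character sum decomposes it as $T_1 - T_2 + T_3$.

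\textbf{Step 2: Type I bounds.} Writing out $T_1 = \sum_{\deg a \leq u} \mu(a) \sum_{\deg b = n - \deg a} (\deg b)\, \Psi(ab)$, the bounds $|\mu| \leq 1$ and $\deg b \leq n$ yield $|T_1| \leq n \Sigma_1$. For $T_2$, group by $g := ac$ so the coefficient of $\sum_{\deg h = n - \deg g} \Psi(gh)$ becomes $c(g) := \sum_{ac=g,\ \deg a \leq u,\ \deg c \leq v} \mu(a) \Lambda(c)$; this is bounded by $\sum_{c \mid g} \Lambda(c) = \deg g \leq u+v \leq n$, yielding $|T_2| \leq n \Sigma_1$.

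\textbf{Step 3: Type II bound.} Grouping $g := ab$ in $T_3$, one obtains
\[ T_3 = \sum_{v \leq i \leq n-u} \sum_{\deg g = n-i} \alpha(g) \sum_{\deg h = i} \Lambda(h)\, \Psi(gh), \qquad \alpha(g) := \sum_{a \mid g,\ \deg a > u} \mu(a), \]
with $|\alpha(g)| \leq \tau(g)$. Applying Cauchy--Schwarz in the $h$-variable at each $i$,
\[ |T_3(i)|^2 \leq \Bigl(\sum_{\deg h = i} \Lambda(h)^2\Bigr) \sum_{g_1, g_2} \alpha(g_1)\overline{\alpha(g_2)} \sum_{\deg h = i} \Psi(hg_1)\overline{\Psi(hg_2)}, \]
so the inner character sum is precisely the one appearing in $\Sigma_2$. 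Bound $|\alpha| \leq \tau$, then use $\tau(g_1) \tau(g_2) \leq \tfrac{1}{2}(\tau(g_1)^2 + \tau(g_2)^2)$ together with the symmetry of $|\sum_h \Psi(hg_1) \overline{\Psi(hg_2)}|$ in $g_1 \leftrightarrow g_2$ to pull out a $\max_{g_1}$ and recover $\Sigma_2$. The moment estimates $\sum_{\deg h = i} \Lambda(h)^2 \ll i\, q^i$ (from $\sum \Lambda(h) = q^i$ and $\Lambda(h) \leq i$) and $\sum_{\deg g = k} \tau(g)^2 \ll k^3 q^k$ (from the Dirichlet series $\zeta(z)^4/\zeta(z^2)$), combined with summation over the $\leq n$ allowed values of $i$, assemble the required $n^{5/2} q^{n - (u+v)/2} \Sigma_2^{1/2}$ contribution.

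The main obstacle is the bookkeeping in Step 3: the Cauchy--Schwarz step, the AM--GM symmetrization, and the moment estimates must be arranged so that the final bound has exactly the stated shape $n^{5/2} q^{n-(u+v)/2} \Sigma_2^{1/2}$. The algebraic identity in Step 1 and the Type I bounds in Step 2 are routine function-field transcriptions of the classical Vaughan-identity argument.
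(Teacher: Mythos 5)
The paper does not prove this lemma at all: it is quoted from M\'erai's work and used as a black box, so there is no internal proof to compare against. Your argument is the standard derivation (the convolution identity $\Lambda=\Lambda^{\le v}+\mu^{\le u}*L-\mu^{\le u}*\Lambda^{\le v}*\mathbf{1}+\mu^{>u}*\Lambda^{>v}*\mathbf{1}$, trivial treatment of the Type~I pieces, Cauchy--Schwarz plus moment bounds for the Type~II piece), and it is essentially sound; Steps~1 and~2 are correct as written. Two points need attention. First, the hypothesis: your proof (and the application in the paper's main theorem) requires $u+v\le n$, not $u+v>n$ as the lemma is stated --- under $u+v>n$ the Type~II range $v\le i\le n-u$ is empty and the lemma degenerates; you should state explicitly that you are proving the version with $u+v\le n$, which also guarantees $v<n$ so that $\Lambda^{\le v}$ vanishes on $\mathcal{M}(n)$, and which is needed for the inequality $q^{n/2}\le q^{n-(u+v)/2}$ you implicitly use. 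Second, the bookkeeping you flag in Step~3 does not quite close as described: your estimates give $|T_3(i)|\ll\sqrt{i(n-i)^3}\,q^{n/2}\Sigma_2^{1/2}\le n^2q^{n/2}\Sigma_2^{1/2}$, and multiplying by ``$\le n$ values of $i$'' yields $n^3q^{n-(u+v)/2}\Sigma_2^{1/2}$, which overshoots the claimed $n^{5/2}$. The fix is to count the range of $i$ sharply: there are only $n-u-v+1$ admissible values, and $n-u-v+1\ll q^{(n-u-v)/2}$ for every $q\ge 2$ with an absolute constant, so the sum over $i$ costs a factor $\ll q^{(n-u-v)/2}$ rather than $n$, giving $|T_3|\ll n^2q^{n-(u+v)/2}\Sigma_2^{1/2}$, which is stronger than required. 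With those two repairs the proof is complete.
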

In particular, it suffices to obtain upper bounds on $\Sigma_1$ and $\Sigma_2$. Using polarization, we will reduce the computation to computing character sums on multilinear polynomials. 

\subsection{Polarization}
In this subsection, we introduce the technique of polarization. Given a function $R(f)$ of the coefficients of $f$ and polynomial $h$ with $\deg h < \deg f$, let $\Delta_hR(f) := R(f+h)-R(f)$. The key idea is that \[ \Delta_{h_1} \cdots \Delta_{h_d} R(f) \] is a multilinear polynomial in $h_1, \dots, h_d$ which is independent of $f$, called the \emph{polarization} of $R(f)$ and denoted $R^{\circ}(h_1, \dots, h_d)$. In particular, $R^{\circ}$ is a $d$-tensor and satisfies $R^{\circ}(x, \dots, x) = R(x)$. Polarization has been used in several instances for the computation of exponential sums; please refer to \cite{Schmidt1984}, \cite{Gowers_2011}, and \cite{moshkovitz2024quasilinearrelationpartitionanalytic} for examples.

\begin{lemma}\label{Polarization result}
  Let $P_1(f), \dots, P_m(f)$ be polynomials in the coefficients of a $k$-degree polynomial $f$. Let $j=\max(\deg P_1, \dots, \deg P_m)$ and $S$ denote the set of indices $i \in [1, m]$ for which $\deg P_i = j$. Then we have \[ \left|\sum_{\deg f = k} \prod_{i=1}^m \psi_i(P_i(f))\right| \le \left(q^{k}\left|\sum_{x \in (\mathbb{F}_q^k)^j} \prod_{i \in S} \psi_i(P_i^{\circ}(x)) \right|\right)^{1/2^j},\] where the sum above is over monic $f$. 
\end{lemma}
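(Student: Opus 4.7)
The strategy is an iteration of Cauchy--Schwarz (Weyl differencing) applied $j$ times, followed by the polarization identity to extract a multilinear form in the differencing variables. Set $T := \sum_{\deg f = k,\, f\text{ monic}} \prod_{i=1}^m \psi_i(P_i(f))$, and for each $d \in \{0, \ldots, j\}$ with $h_1, \ldots, h_d$ ranging over polynomials of degree $< k$, define
\[ T_d(h_1, \ldots, h_d) := \sum_{\substack{f \text{ monic} \\ \deg f = k}} \prod_{i=1}^m \psi_i\bigl(\Delta_{h_1} \cdots \Delta_{h_d} P_i(f)\bigr), \]
so that $T_0 = T$ is the quantity to be bounded.

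The backbone of the argument is the differencing identity $|T_{d-1}(h_1, \ldots, h_{d-1})|^2 = \sum_{h_d} T_d(h_1, \ldots, h_d)$, obtained by expanding $|T_{d-1}|^2$ as a double sum over monic $f, g$ of degree $k$, setting $h_d := f - g$ (necessarily of degree $< k$), recognizing $[\Delta_{h_1}\cdots\Delta_{h_{d-1}}P_i](f) - [\Delta_{h_1}\cdots\Delta_{h_{d-1}}P_i](f-h_d)$ as $\Delta_{h_d}[\Delta_{h_1}\cdots\Delta_{h_{d-1}}P_i](f - h_d)$, and shifting $f$ by $h_d$ (which preserves monicness since $\deg h_d < k$). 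Iterating Cauchy--Schwarz in the form $|\sum a_n|^2 \le N \sum |a_n|^2$ --- at stage $d$ against the $q^{(d-1)k}$ tuples $(h_1,\ldots,h_{d-1})$ --- starting from $|T|^2 = \sum_{h_1} T_1(h_1)$ yields, by induction on $d$, a bound of the form
\[ |T|^{2^d} \le q^{(2^d - d - 1)k} \sum_{h_1, \ldots, h_d} T_d(h_1, \ldots, h_d). \]

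The final step is polarization at $d = j$. For $i \in S$ the $j$-fold discrete derivative $\Delta_{h_1} \cdots \Delta_{h_j} P_i(f) = P_i^\circ(h_1, \ldots, h_j)$ is by definition independent of $f$, while for $i \notin S$ it vanishes identically since $\deg P_i < j$ (and $\psi_i(0)=1$). Hence $T_j(h_1, \ldots, h_j) = q^k \prod_{i \in S} \psi_i\bigl(P_i^\circ(h_1, \ldots, h_j)\bigr)$; plugging this back, absorbing the powers of $q^k$, and taking $2^j$-th roots gives the stated inequality. The principal technical obstacle is tracking the accumulated exponent of $q^k$ through the Cauchy--Schwarz chain and making sure at every stage that the differencing variables live in $\mathbb{F}_q^k$ (polynomials of degree $< k$) rather than among monic polynomials of degree $k$, so that the substitution $g = f - h_d$ and the subsequent shift of $f$ both preserve monicness; once this is arranged, the polarization step is a formal consequence of the iterated-difference definition of $P_i^\circ$.
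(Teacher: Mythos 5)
Your strategy---$j$-fold Cauchy--Schwarz differencing followed by polarization at the last step---is exactly the paper's, and your bookkeeping is in fact more explicit than the paper's one-line ``repeatedly squaring in this fashion.'' The problem is the final sentence: ``absorbing the powers of $q^k$ \dots\ gives the stated inequality'' does not hold. Your (correct) intermediate bound $|T|^{2^d}\le q^{(2^d-d-1)k}\sum_{h_1,\dots,h_d}T_d(h_1,\dots,h_d)$, combined with $T_j(h_1,\dots,h_j)=q^k\prod_{i\in S}\psi_i\bigl(P_i^{\circ}(h_1,\dots,h_j)\bigr)$, yields
\[ |T| \le \left(q^{(2^j-j)k}\left|\sum_{x\in(\mathbb{F}_q^k)^j}\prod_{i\in S}\psi_i(P_i^{\circ}(x))\right|\right)^{1/2^j}, \]
whose prefactor is $q^{(2^j-j)k}$, not $q^{k}$. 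The two agree only when $j=1$; for $j\ge 2$ the stated inequality is strictly stronger than what the chain of inequalities delivers, and the surplus factor $q^{(2^j-j-1)k/2^j}$ cannot simply be absorbed.

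This gap is not repairable, because the lemma as printed is false for $j\ge 2$. Take $m=1$ and $P_1(f)=a_0a_1$, the product of the two lowest coefficients of $f$, with $\psi_1$ nontrivial; then $j=2$, $S=\{1\}$, and $P_1^{\circ}(h,h')=h_0h'_1+h_1h'_0$. The left-hand side is $q^{k-2}\bigl|\sum_{a_0,a_1}\psi_1(a_0a_1)\bigr|=q^{k-1}$, while $\sum_{h,h'\in\mathbb{F}_q^k}\psi_1(h_0h'_1+h_1h'_0)=q^{2k-2}$, so the claimed right-hand side is $(q^{k}\cdot q^{2k-2})^{1/4}=q^{(3k-2)/4}$, which is smaller than $q^{k-1}$ for every $k\ge 3$. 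What your argument actually proves is the corrected inequality displayed above (which the counterexample does satisfy). The paper's own proof makes the same unjustified leap from $q^{(2^j-j)k}$ to $q^{k}$, so you have reproduced its approach faithfully but inherited its error; note that the too-small prefactor propagates into the factors $q^{(n-d)/2^j}$ and $q^{k/2^j}$ appearing in the proof of Theorem \ref{main theorem}, which would need to be revised accordingly.
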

\begin{proof}
  Note that \begin{align*}
    \left|\sum_{\deg f = k} \prod_{i=1}^m \psi_i(P_i(f))\right|^2 & = \sum_{\deg f = k} \sum_{\deg h < k} \prod_{i=1}^m \psi_i(P_i(f+h))\overline{\psi_i(P_i(f))} \\ &= \left|\sum_{\deg f = k} \sum_{\deg h < k} \prod_{i=1}^m \psi_i(\Delta_h P_i(f))\right| \\ &\le \sum_{\deg f = k}\left| \sum_{\deg h < k} \prod_{i=1}^m \psi_i(\Delta_h P_i(f))\right|. 
  \end{align*}
  Repeatedly squaring in this fashion, we obtain that at the $j$th squaring, \[ \left|\sum_{\deg f = k} \prod_{i=1}^m \psi_i(P_i(f))\right|^{2^j} \le q^{k} \left|\sum_{h_1, \dots, h_j} \prod_{i \in S} \psi_i(P_i^{\circ}(h_1, \dots, h_j)) \right|, \] as desired. 
\end{proof}
Recall that the purpose of this application of polarization is to obtain upper bounds on the sums $\Sigma_1$ and $\Sigma_2$ from Lemma \ref{Vaughan's identity} upon selecting $\Psi(f) = \prod_{i=1}^m \psi_i(R_i(f))$. Indeed, we may consider the following choices of $P_i$, which are both polynomials in the coefficients of their arguments. 
\begin{itemize}
  \item For $\Sigma_1$, let $(P_i)_g(f)=R_i(fg)$ for all $f \in \mathcal{M}(n-\deg g)$ and $1 \le i \le m$. 
  \item For $\Sigma_2$, let $(P_i)_{g_1, g_2}(f) = R_i(g_1f) - R_i(g_2f)$ for all $f \in \mathcal{M}(n - \max(\deg g_1, \deg g_2))$ and $1 \le i \le m$. 
\end{itemize}

\subsection{Exponential sums of multilinear polynomials}
Note that Lemma \ref{Polarization result} requires us to only work with exponential sums of multilinear polynomials going forward. In order to obtain useful bounds on the exponential sums of multilinear polynomials, we require a corollary of a recent result on the comparison between the analytic rank and partition rank of tensors.
\begin{defn}
  The \emph{analytic rank} $\operatorname{AR}(T)$ of $T$ is defined as \[ \operatorname{AR}(T) := -\log_q\left(\frac{1}{q^n} \sum_{(x_1, \dots, x_n) \in \mathbb{F}_q^n} \psi_0(T(x_1, \dots, x_n))\right). \] The \emph{partition rank} $\operatorname{PR}(T)$ of $T$ is defined as the smallest number $r$ such that $T$ can be written as the sum of $r$ reducible homogeneous polynomials. 
\end{defn}

\begin{lemma}[Moshkovitz-Zhu, \cite{moshkovitz2024quasilinearrelationpartitionanalytic}]
  There exists a constant $c > 0$, dependent only on $j$, such that for all $j$-tensors $T$, \[ \operatorname{AR}(T) \ge c \cdot \operatorname{PR}(T). \]
\end{lemma}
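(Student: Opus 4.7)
The plan is to establish the harder direction of the equivalence between analytic rank and partition rank for $j$-tensors, namely $\operatorname{PR}(T) \le (1/c) \cdot \operatorname{AR}(T)$ for a constant $c = c(j)$ depending only on the arity. The reverse inequality $\operatorname{AR}(T) \le \operatorname{PR}(T)$ is comparatively routine: if $T$ is a sum of $r = \operatorname{PR}(T)$ reducible homogeneous pieces $u_k(x_{I_k}) v_k(x_{I_k^c})$, then conditioning on the variables in $I_k$ separates the corresponding character sum into a product, and each such piece can deflate the normalized sum by at most a factor of $q^{-1}$, giving the bound after taking logs. The real content is therefore to extract an explicit partition-rank decomposition from the single analytic hypothesis that $q^{-n}\sum_x \psi_0(T(x)) \ge q^{-A}$.

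I would proceed by induction on the arity $j$. For $j=1$ the statement is trivial, and for $j=2$ the partition rank of a bilinear form equals its ordinary matrix rank, while its analytic rank is linearly comparable to that rank via classical Gauss-sum estimates, yielding the base case. For $j \ge 3$, I would fix the last coordinate and set $T_v(x_1, \dots, x_{j-1}) := T(x_1, \dots, x_{j-1}, v)$; expressing the character sum of $T$ as an average of character sums of $T_v$ and applying Cauchy-Schwarz forces $\operatorname{AR}(T_v) = O(A)$ on a positive-density set of $v$. The inductive hypothesis then provides, for each such $v$, a partition-rank decomposition of the $(j-1)$-tensor $T_v$ into $O(A)$ reducible pieces, say $T_v = \sum_k u_{k,v}(x_{I_k}) \, w_{k,v}(x_{I_k^c})$.

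The main obstacle is gluing these slice-by-slice decompositions into a single global partition-rank decomposition of $T$ itself without incurring a superlinear loss in the number of reducible pieces. This is the technical heart of the Moshkovitz-Zhu argument: one treats the family $(T_v)_v$ as a tensor with the last coordinate singled out and iteratively peels off a single reducible piece via a bias-versus-structure dichotomy, of the form ``either $T$ already has small partition rank, or some slicing along a carefully chosen subspace produces a strict drop in analytic rank''. Because analytic rank is subadditive under adding a reducible piece, the process terminates after $O(A)$ iterations, and cascading the constant through the arity levels $j, j-1, \dots, 2$ yields the claimed bound with $c = c(j)$. I would expect the bookkeeping of how much analytic rank is lost per extracted piece to be the delicate point, since keeping this loss bounded rather than growing with the current rank is exactly what converts a quasi-polynomial bound into the linear one stated.
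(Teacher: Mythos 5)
The paper does not prove this lemma at all: it is imported as a black box from Moshkovitz--Zhu, which is the appropriate move, since the statement is a research-level theorem whose proof occupies an entire paper of its own. Your proposal is therefore not competing with an in-paper argument but attempting to reprove the cited result, and as a proof it has a genuine gap. The easy direction $\operatorname{AR}(T)\le\operatorname{PR}(T)$ and the $j=2$ base case (partition rank $=$ matrix rank, which is comparable to analytic rank via Gauss sums) are fine, but the entire content of the theorem is the step you yourself flag as ``the main obstacle'' and then dispatch in a single sentence. Two concrete problems. First, the Cauchy--Schwarz slicing gives partition-rank decompositions of the slices $T_v$ only for a positive-density set of $v$, and these decompositions vary arbitrarily with $v$; there are exponentially many slices, so naively assembling them gives no bound on $\operatorname{PR}(T)$ at all. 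Forcing the slice decompositions to depend on $v$ in a structured (polynomial or linear-algebraic) way is where essentially all of the work in Moshkovitz--Zhu and its predecessors lives. Second, your termination argument is a non sequitur: subadditivity of analytic rank under adding a reducible piece shows that peeling off one piece can decrease $\operatorname{AR}$ by \emph{at most} a bounded amount, i.e., it bounds the drop from above. To terminate in $O(A)$ iterations you need each carefully chosen peel to decrease $\operatorname{AR}$ by at least a fixed positive amount, and establishing that lower bound on the drop is precisely the bias-versus-structure dichotomy you would still have to prove.

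One further point worth checking against the source: the cited work is titled as a \emph{quasi-linear} relation between partition and analytic rank, so the lemma as stated here (a purely linear bound $\operatorname{AR}(T)\ge c\cdot\operatorname{PR}(T)$ with $c$ depending only on $j$ and no field-size hypothesis) may be slightly stronger than what the reference actually proves; if the true bound carries logarithmic factors or a lower bound on $q$, those would propagate into the exponents of Theorem \ref{main theorem}. That is an issue with the citation rather than with your argument, but it is the kind of discrepancy a reproof attempt should surface rather than inherit.
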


We will now state a technical lemma regarding polarizations and ranks along with a corollary pertaining to character sums. Combined with Lemma \ref{Vaughan's identity}, this lemma eliminates the need to consider character sum bounds beyond this. 

\begin{lemma}\label{char sums and ranks}
For all polynomials $f$ of degree $n<p$, $\operatorname{PR}(f^{\circ}) \ge \operatorname{rank}(f)$. Consequently, for any tuple $(P_1, \dots, P_m)$ of polynomials and nontrivial tuple $(\psi_1, \dots, \psi_m)$ of additive characters on $\mathbb{F}_q$, there exists a constant $c>0$ dependent only on $\max(\deg P_1, \dots, \deg P_m)$ such that \[ \left|\sum_{x \in (\mathbb{F}_q^k)^j} \prod_{i \in S} \psi_i(P_i^{\circ}(x)) \right| \le q^{n-c \cdot \operatorname{rank}((P_i)_{i \in S})}, \] where $j = \max(\deg P_1, \dots, \deg P_m)<p$ and $S$ is the set of indices $1 \le i \le m$ for which $\deg P_i = j$. 
\end{lemma}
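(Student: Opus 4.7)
My plan is to prove the two assertions in turn, first establishing the partition rank-vs-rank inequality for a single polynomial and then combining it with the Moshkovitz--Zhu inequality to control the exponential sum.

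For the first assertion, fix $f$ of degree $n<p$ and suppose $f^\circ$ admits a partition rank decomposition
\[ f^\circ \;=\; \sum_{s=1}^{r} G_s\cdot H_s, \qquad r=\operatorname{PR}(f^\circ), \]
where each $G_s$ and $H_s$ is multilinear in a complementary pair of nonempty subsets of the $n$ vector arguments. The crucial algebraic input is the polarization identity
\[ f^\circ(x,\dots,x) \;=\; n!\cdot f_n(x), \]
with $f_n$ the top-degree homogeneous component of $f$; this follows by induction on $n$, using that $\Delta_h$ strictly lowers degree and that the iterated operator $\Delta_{h_1}\cdots\Delta_{h_n}$ applied to a degree-$n$ polynomial produces a multilinear form whose diagonal restriction agrees with $n!\cdot f_n$. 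Restricting the decomposition above to the diagonal $h_1=\cdots=h_n=x$ expresses $n!\cdot f_n(x)$ as a sum of $r$ products of two homogeneous polynomials in $x$ of strictly positive degrees summing to $n$, i.e., as a sum of $r$ reducible degree-$n$ homogeneous polynomials. Since $n<p$, the scalar $n!$ is invertible, so absorbing it yields a rank-$r$ decomposition of $f_n$ and gives $\operatorname{rank}(f)=\operatorname{rank}(f_n)\le r=\operatorname{PR}(f^\circ)$.

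For the second assertion, I consolidate the product of characters using the bijection $\varphi$: setting $T := \sum_{i\in S}\varphi(\psi_i)\,P_i^\circ$ gives
\[ \prod_{i\in S}\psi_i\bigl(P_i^\circ(x)\bigr) \;=\; \psi_0\bigl(T(x)\bigr). \]
Because the difference operator is linear and every $P_i$ with $i\in S$ has top degree $j$, the tensor $T$ is itself a polarization: $T=L^\circ$ where $L := \sum_{i\in S}\varphi(\psi_i)\,P_i$. Nontriviality of the character tuple ensures the coefficient vector $(\varphi(\psi_i))_{i\in S}$ is nonzero, so by definition of the rank of a tuple $\operatorname{rank}(L)\ge\operatorname{rank}((P_i)_{i\in S})$, and chaining with the first assertion produces
\[ \operatorname{PR}(T) \;=\; \operatorname{PR}(L^\circ) \;\ge\; \operatorname{rank}(L) \;\ge\; \operatorname{rank}\bigl((P_i)_{i\in S}\bigr). \]
The Moshkovitz--Zhu inequality then gives $\operatorname{AR}(T)\ge c\cdot\operatorname{PR}(T)$ for a constant $c$ depending only on $j$, and unwrapping the definition of analytic rank converts this into
\[ \Bigl|\sum_{x\in(\mathbb{F}_q^k)^j}\psi_0(T(x))\Bigr| \;\le\; q^{\,kj-\operatorname{AR}(T)} \;\le\; q^{\,kj-c\cdot\operatorname{rank}((P_i)_{i\in S})}, \]
where the $kj$ in the exponent records the total number of scalar variables of the $j$-tensor $T$.

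The hard part is the first assertion, and specifically the use of $n<p$ to invert $n!$: without that hypothesis, the diagonal specialization of a partition rank decomposition collapses to the uninformative identity $0=0$, and there is no way to read off a rank decomposition of $f_n$. A minor degenerate case worth flagging is when the degree-$j$ component of $L$ vanishes, so that $T=L^\circ=0$ and the exponential sum equals the trivial $q^{kj}$; here the specific nontrivial combination $L$ witnesses rank $0$ at degree $j$, forcing $\operatorname{rank}((P_i)_{i\in S})=0$ by definition of the tuple rank, so the inequality still holds.
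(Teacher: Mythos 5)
Your proposal is correct and follows essentially the same route as the paper: the first assertion via the diagonal identity $f^{\circ}(x,\dots,x)=n!\,f_n(x)$ and invertibility of $n!$ for $n<p$, and the second by collapsing the character product into $\psi_0$ of the nontrivial combination $L=\sum_{i\in S}\varphi(\psi_i)P_i$, applying Moshkovitz--Zhu, and invoking the definition of tuple rank. Your write-up is in fact slightly more careful than the paper's (the explicit diagonal-restriction argument, the $kj$ count of scalar variables in the exponent, and the flagged degenerate case where $L^{\circ}=0$), but the underlying argument is identical.
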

\begin{proof}
Note that because $n<p$, the degree $n$ homogeneous component of $f$ is given by $\frac{1}{n!}f^{\circ}(x, \dots x)$. Thus, \[ \operatorname{rank}(f) = \operatorname{rank}(f^{\circ}(x, \dots x)) \le \operatorname{PR}(f^{\circ}), \] proving the first part. Let $L$ denote the set of nontrivial $\mathbb{F}_q$-combinations of $\{P_i: i \in S\}$. Observe that \begin{align*} \left|\sum_{x \in (\mathbb{F}_q^k)^j} \prod_{i \in S} \psi_i(P_i^{\circ}(x)) \right| = \left|\sum_{x \in (\mathbb{F}_q^k)^j} \psi_0\left(\sum_{i \in S} \varphi(\psi_i)P_i^{\circ}(x)\right) \right| \le \max_{P \in L} \left|\sum_{x \in (\mathbb{F}_q^k)^j} \psi_0\left(P^{\circ}(x)\right) \right| \end{align*} and \begin{align*} \max_{P \in L} \left|\sum_{x \in (\mathbb{F}_q^k)^j} \psi_0\left(P^{\circ}(x)\right) \right| &\le \max_{P \in L} q^{n-\operatorname{AR}(P^{\circ})} \\ &\le \max_{P \in L} q^{n-c \cdot \operatorname{PR}(P^{\circ})} \\ &\le \max_{P \in L} q^{n-c \cdot \operatorname{rank}(P)} \\ &= q^{n-c \cdot \operatorname{rank}((P_i)_{i \in S})}, \end{align*} which proves the second part.
\end{proof}

\section{Proof of main result}\label{main proof}
We are ready to prove our main result.
\begin{proof}[Proof of Theorem \ref{main theorem}]
By the orthogonality of characters, we have \[ I_n(R_1, \dots, R_m) = \frac{1}{q^m} \sum_{(\psi_1, \dots, \psi_m)} \sum_{f \in \mathcal{P}(n)} \prod_{i=1}^m \psi_i(R_i(f)). \] Thus, \[ I_n(R_1, \dots, R_m) - \frac{I(n)}{q^m} = \frac{1}{q^m} \sum_{(\psi_1, \dots, \psi_m) \neq \mathbf{0}} \sum_{f \in \mathcal{P}(n)} \prod_{i=1}^m \psi_i(R_i(f)). \] We may rewrite the above as \begin{align*} \sum_{f \in \mathcal{P}(n)} \prod_{i=1}^m \psi_i(R_i(f)) &= \sum_{f \in \mathcal{M}(n)} \mathbf{1}_{f \in \mathcal{P}(n)}\prod_{i=1}^m \psi_i(R_i(f)) \\ &= \sum_{f \in \mathcal{M}(n)} \Lambda(f) \prod_{i=1}^m \psi_i(R_i(f)) + O\left(\sum_{f \in \mathcal{M}(n)} \mathbf{1}_{\Lambda(f) > 1}\right) \\ &= \sum_{f \in \mathcal{M}(n)} \Lambda(f) \prod_{i=1}^m \psi_i(R_i(f)) + O(I(n)-I(n/2))  \\ &= \sum_{f \in \mathcal{M}(n)} \Lambda(f) \prod_{i=1}^m \psi_i(R_i(f)) + O(q^{n/2}/n) \end{align*} Therefore, \[ \left|I_n(R_1, \dots, R_m) - \frac{I(n)}{q^m}\right| \le \sum_{(\psi_1, \dots, \psi_m) \neq \mathbf{0}} \left|\sum_{f \in \mathcal{M}(n)} \Lambda(f)\prod_{i=1}^{m}  \psi_i(R_i(f))\right| \] by the triangle inequality. By Lemma \ref{Vaughan's identity}, we have for all nontrivial tuples $(\psi_1, \dots, \psi_m)$ of additive characters on $\mathbb{F}_q$ that
\[ \left|\sum_{f \in \mathcal{M}(n)} \Lambda(f)\prod_{i=1}^m \psi_i(R_i(f))\right| \ll n\Sigma_1 + n^{5/2}q^{n-(u+v)/2}\Sigma_2^{1/2} \] 
for all choices of $u$ and $v$ with $u+v<n$, where 
\[ \Sigma_1 = \sum_{\deg g \le u+v} \left|\sum_{\deg h = n - \deg g} \prod_{i=1}^m \psi_i((P_i)_g(h)) \right| \] 
and 
\[ \Sigma_2 = \max_{v \le k \le n-u} \max_{\deg g_1 = n-k} \sum_{\deg g_2 = n-k} \left|\sum_{\deg h = k} \prod_{i=1}^m \psi_i((P_i)_{g_1, g_2}(h)) \right|. \] 
By Lemma \ref{Polarization result}, we have 
\[ \Sigma_1 \le \sum_{0 \le d \le u+v} q^{\frac{n-d}{2^j}} \sum_{\deg g = d} \left|\sum_{x \in (\mathbb{F}_q^d)^{n-d}} \prod_{i \in S} \psi_i((P_i)_g^{\circ}(x))\right|^{1/2^j} \] 
and
\[ \Sigma_2 = \max_{v \le k \le n-u} \max_{\deg g_1 = n-k} \sum_{\deg g_2 = n-k} q^{\frac{k}{2^j}}\left|\sum_{x \in (\mathbb{F}_q^{n-k})^k} \prod_{i \in S} \psi_i((P_i)_{g_1, g_2}^{\circ}(x))\right|^{1/2^j}. \] 
Thus, by Lemma \ref{char sums and ranks}, we have 
\[ \Sigma_1 = \sum_{0 \le d \le u+v} \sum_{\deg g = d} q^{(n-d)+\frac{n-d}{2^j}-c \cdot \operatorname{rank}(((P_i)_g)_{i \in S})} \] 
and 
\[ \Sigma_2 = \max_{v \le k \le n-u} \max_{\deg g_1 = n-k} \sum_{\deg g_2 = n-k} q^{k-c \cdot \operatorname{rank}(((P_i)_{g_1, g_2})_{i \in S})}. \] The desired result follows. 
\end{proof}

\section{Future Directions}\label{future directions}
In this section, we provide several research questions that allow for potential extensions of this work. 

First, we note that our error bounds rely on the assumption that $n < p$; that is, we work over large finite fields. This limitation lies in the proof of Lemma \ref{char sums and ranks}, which establishes a bound on the partition rank of the polarization of polynomials. 

\begin{qn}
  Is there a way to prove a (potentially weaker) version of Lemma \ref{char sums and ranks}? Can this be rectified for $p \ge n$ by working $p$-adically?
\end{qn}

Observe that our error bounds are independent of the polynomials in $\{R_1, \dots, R_m\}$ whose degree is not $\max(\deg R_1, \dots, \deg R_m)$. 
\begin{qn}
  Can our method of carrying out polarization be improved to obtain refined bounds that take into account of \emph{all} $R_i$? 
\end{qn}
Currently, there are no rank inequalities available that allow us to simplify the ranks in our error bounds. 
\begin{qn}
  Find upper bounds for $\operatorname{rank}(((P_i)_g)_{i \in S})$ and $\operatorname{rank}(((P_i)_{g_1, g_2})_{i \in S})$ in terms of $\operatorname{rank}(R_i)$, $g$, $g_1$, and $g_2$. 
\end{qn}

\section*{Acknowledgements}
I would like to thank Dr.\ Simon Rubinstein-Salzedo for useful discussions during the creation of this paper. 

\printbibliography

\end{document}